\newtheorem{theorem}{Theorem}[section]
\newtheorem{corollary}[theorem]{Corollary}
\newtheorem{definition}[theorem]{Definition}
\newtheorem{lemma}[theorem]{Lemma}
\begin{document}
\title{\Huge \bf   Non-Archimedean meromorphic solutions of functional equations}
\author{ Pei-Chu Hu \& Yong-Zhi Luan$^*$ }
\date{}
\maketitle

\begin{abstract}
In this paper, we discuss meromorphic solutions of functional equations over non-Archimedean fields, and prove analogues of the Clunie lemma,  Malmquist-type theorem and Mokhon'ko theorem.
\end{abstract}

\pagenumbering{arabic}

\begin{figure}[b]
\rule[-2.5truemm]{5cm}{0.1truemm}\\[2mm] {\footnotesize
Mathematics Subject Classification 2000 (MSC2000). Primary 11S80, 12H25; Secondary 30D35. \\ $^*$The work of first
author was partially supported by National Natural Science Foundation of China (Grant No. 11271227), and supported partially by PCSIRT ( IRT1264).\\
Key words and phrases: meromorphic solutions, functional equations, Nevanlinna theory}
\end{figure}


\section{Introduction}

Let $\kappa$ be an algebraically closed field of characteristic zero,
complete for a non-trivial non-Archimedean absolute value $|\cdot|$.  Let $\mathcal{A}(\kappa)$ (resp. $\mathcal{M}(\kappa)$) denote the set of
entire (resp. meromorphic) functions over $\kappa$. As usual, if $R$ is a ring, we use $R[X_0,X_1,...,X_n]$ to denote the ring of polynomials of variables $X_0,X_1,...,X_n$ over $R$. We will use the following assumption:
\begin{description}
  \item[(A)]  Fix a positive integer $n$. Take $a_i,b_i$ in $\kappa$ such that $|a_i|=1$ for each $i=0,1,...,n$, and such that
  \begin{equation*}
    L_i(z)=a_iz+b_i\ (i=0,1,...,n)
  \end{equation*}
  are distinct, where $a_0=1,b_0=0$. Let $f$ be a non-constant meromorphic function over $\kappa$ and write
  \begin{equation*}
    f_i=f\circ L_i,\ i=0,1,...,n
  \end{equation*}
  with $f_0=f$. Take non-zero elements
\begin{equation*}
B\in \mathcal{M}(\kappa)[X];\ \Omega,\Phi\in\mathcal{M}(\kappa)[X_0,X_1,...,X_n].
\end{equation*}
\end{description}

Under the assumption (A), there exist $\{b_0,...,b_q\}\subset {\cal M}(\kappa)$ with $b_q\not\equiv 0$
such that
\begin{equation}\label{Bfunc}
B(X)=\sum_{k=0}^qb_kX^k.
\end{equation}
Similarly, write
\begin{equation}\label{diffop}
\Omega\left(X_0,X_1,...,X_n\right)=\sum_{i\in I}c_iX_0^{i_0}X_1^{i_1}\cdots
X_n^{i_n},
\end{equation}
where $i=(i_0,i_1,...,i_n)$ are non-negative integer indices, $I$ is a
finite set, $c_i\in {\cal M}(\kappa)$, and
\begin{equation}
\Phi\left(X_0,X_1,...,X_n\right)=\sum_{j\in J}d_jX_0^{j_0}X_1^{j_1}\cdots
X_n^{j_n},\label{diffopII}
\end{equation}
where $j=(j_0,j_1,...,j_n)$ are non-negative integer indices, $J$ is a
finite set, $d_j\in {\cal M}(\kappa)$.

In this paper, we will use the symbols from \cite{HY} on value distribution of meromorphic functions.
For example, let $\mu(r,f)$ denote the maximum term of power series for $f\in \mathcal{A}(\kappa)$ and its fractional extension to $\mathcal{M}(\kappa)$,
$m(r,f)$ the compensation (or proximity) function of $f$, $N(r,f)$ the valence function of $f$ for poles, and the characteristic function of $f$
\begin{equation*}
    T(r,f)=m(r,f)+N(r,f).
\end{equation*}
 Now we can state our results as follows:

\begin{theorem}\label{Clunie}
Assume that the condition $({\rm A})$ holds.
If $f$ is a solution of the following functional equation
\begin{equation}\label{Clun-dif-eq}
    B(f)\Omega(f,f_1,...,f_n)=\Phi(f,f_1,...,f_n)
\end{equation}
with $\deg B\geq \deg \Phi$, then
\begin{equation}\label{Clunieeq}
m(r,\Omega)\leq \sum_{i\in I}m(r,c_i)+
\sum_{j\in J}m(r,d_j)+lm\left(r,\frac{1}{b_q}\right)+
l\sum_{j=0}^qm(r,b_j),
\end{equation}
where $l=\max\{1,\deg\Omega\}$, $\Omega=\Omega(f,f_1,...,f_n)$. Further, if $\Phi$ is a polynomial of $f$, we also have
\begin{equation}\label{Clunieeq1}
N(r,\Omega)\leq \sum_{i\in I}N(r,c_i)+
\sum_{j\in J}N(r,d_j)+O\left(\sum_{j=0}^qN\left(r,\frac{1}{b_j}\right)\right).
\end{equation}
\end{theorem}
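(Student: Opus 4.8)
The plan is to exploit the two structural features of the non-Archimedean setting. The maximum term $\mu(r,\cdot)$ is \emph{multiplicative}, $\mu(r,gh)=\mu(r,g)\mu(r,h)$, and \emph{ultrametric}, $\mu(r,g+h)\le\max\{\mu(r,g),\mu(r,h)\}$ with equality once the two terms differ in $\mu$-value; hence $m(r,\cdot)=\log^+\mu(r,\cdot)$ is subadditive and even sub-maximal over sums. The hypothesis $|a_i|=1$ enters through the invariance $\mu(r,f_i)=\mu(r,f)$, so, writing $\mu=\mu(r,f)$, each monomial $c_i\prod_k f_k^{i_k}$ has $\mu$-value $\mu(r,c_i)\mu^{|i|}$ with $|i|=i_0+\cdots+i_n$. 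Applying $\mu$ to \eqref{Clun-dif-eq} and using multiplicativity gives the basic identity $\mu(r,B(f))\,\mu(r,\Omega)=\mu(r,\Phi)$, which I will use to trade the unknown $m(r,\Omega)$ against the coefficient data.

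First I would fix, for each $r$, the threshold
\[
T_0(r)=\max\Bigl\{1,\ \max_{0\le k\le q-1}\bigl(\mu(r,b_k)\mu(r,1/b_q)\bigr)^{1/(q-k)}\Bigr\}
\]
and split into two cases. In the \emph{large} case $\mu>T_0(r)$ the leading monomial $b_qf^q$ strictly dominates every lower term of $B(f)$, so by the ultrametric property $\mu(r,B(f))=\mu(r,b_q)\mu^q$ exactly; the basic identity and $\mu(r,\Phi)\le\max_j\mu(r,d_j)\mu^{|j|}$ then give $\mu(r,\Omega)\le\max_j\mu(r,d_j)\mu(r,1/b_q)\mu^{|j|-q}$. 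Since $|j|\le\deg\Phi\le\deg B=q$ and $\mu>1$, we have $\mu^{|j|-q}\le1$, whence $m(r,\Omega)\le\sum_j m(r,d_j)+m(r,1/b_q)$, with no $f$-term surviving. In the \emph{small} case $\mu\le T_0(r)$ I bound $\Omega$ monomial-by-monomial, $\mu(r,\Omega)\le\max_i\mu(r,c_i)\mu^{|i|}$, and use $x^{1/(q-k)}\le\max\{1,x\}$ together with $|i|\le\deg\Omega\le l$ to get $\mu^{|i|}\le\max\{1,\max_k\mu(r,b_k)\mu(r,1/b_q)\}^{l}$; taking $\log^+$ yields $m(r,\Omega)\le\sum_i m(r,c_i)+l\sum_k m(r,b_k)+l\,m(r,1/b_q)$. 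In either case the right-hand side of \eqref{Clunieeq} dominates, so the estimate holds for all $r$. The one delicate point is the strict-domination step justifying $\mu(r,B(f))=\mu(r,b_q)\mu^q$ above the threshold; the fractional exponent $1/(q-k)$ in $T_0$ is exactly what gets absorbed into the factor $l$.

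For the valence bound \eqref{Clunieeq1}, when $\Phi$ depends only on $X_0$ the equation reads $\Omega=\Phi(f)/B(f)$, so $\Omega$ is, as a meromorphic function, determined by $f$ and the coefficients, and I would compute its pole divisor pointwise. At a pole of $f$ the hypothesis $\deg\Phi\le\deg B$ makes $\Phi(f)/B(f)$ finite, so poles of $f$ cannot produce poles of $\Omega$ — this is precisely the mechanism that removes any $N(r,f)$ contribution. At a point where $f$ is finite and the coefficients are regular, a pole of $\Omega$ can occur only at a zero of $B(f)$ not matched by a zero of $\Phi(f)$; comparing with $\Omega=\sum_i c_i\prod_k f_k^{i_k}$ shows that at any such zero some shifted factor $f_k$ must blow up, while the equation forces $\Phi(f)$ to vanish wherever $\Omega$ stays finite. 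Collecting the poles of the $c_i$ and $d_j$ and the zeros of the $b_j$ (where the degree of $B(f)$ may drop) accounts for the three stated terms.

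The main obstacle is this last step: controlling the residual zeros of $B(f)$ at finite points of $f$ by $O\bigl(\sum_j N(r,1/b_j)\bigr)$. The reduction above isolates these zeros cleanly, but bounding them seems to require the functional equation itself, through the dichotomy that each such zero is either cancelled by a zero of $\Phi(f)$ or is a genuine pole of $\Omega$ whose order equals the vanishing order of $B(f)$. Converting this dichotomy into the coefficient bound is where I expect the non-Archimedean first-main-theorem, in the form of Jensen's formula $N(r,1/g)-N(r,g)=\log\mu(r,g)+O(1)$, together with the invariance $N(r,f_i)=N(r,f)$, must be combined carefully. I anticipate that this counting of the zeros of $B(f)$, rather than the proximity estimate, is the technical heart of the argument.
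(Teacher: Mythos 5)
Your proof of the proximity estimate (\ref{Clunieeq}) is correct and is essentially the paper's own argument: your threshold $T_0(r)$ is exactly the $\mu$-level version of the paper's pointwise cutoff $b(z)=\max_{0\le k<q}\{1,(|b_k(z)|/|b_q(z)|)^{1/(q-k)}\}$, your large/small dichotomy with ultrametric strict domination of $b_qf^q$ reproduces its two cases, the invariance $\mu(r,f_i)=\mu(r,f)$ (valid for $r>\max_i|b_i|$, which is harmless) is the paper's Lemma~\ref{LLD-mer} exploiting $|a_i|=1$, and the absorption of the exponents $1/(q-k)$ into $l=\max\{1,\deg\Omega\}$ via $x^{1/(q-k)}\le\max\{1,x\}$ is the same bookkeeping. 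Working directly with $\mu(r,\cdot)$ as a multiplicative ultrametric norm on $\mathcal{M}(\kappa)$, rather than with $|f(z)|$ at generic $z$ followed by density of $|\kappa|$ and continuity, is a purely cosmetic difference.

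For the valence estimate (\ref{Clunieeq1}), however, your proposal has a genuine gap, and you have in fact located it yourself without closing it. Your reduction is sound as far as it goes: writing $\Omega=\Phi(f)/B(f)$ shows poles of $f$ produce no poles of $\Omega$ (since $\deg\Phi\le\deg B$ and the coefficients are generically regular), while the representation $\Omega=\sum_i c_i\prod_k f_k^{i_k}$ shows that any remaining pole of $\Omega$, away from singularities of the $c_i$, $d_j$, $b_j$, must sit at a point that is simultaneously a zero of $B(f)$ and a pole of some shift $f_k$. But the bound of this coincidence set by $O\bigl(\sum_j N(r,1/b_j)\bigr)$ is precisely what you never establish: you say you ``anticipate'' that Jensen's formula $N(r,1/g)-N(r,g)=\log\mu(r,g)+O(1)$ combined with $N(r,f_i)=N(r,f)$ will do it, but Jensen's formula controls global counting functions, not the pointwise cancellation dichotomy you need, and a priori $N(r,1/B(f))$ can be as large as $qT(r,f)$. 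Note also that $N(r,f_i)=N(r,f)$ is an equality of counting functions, not of pole \emph{locations}, so it cannot by itself rule out the coincidences. The paper disposes of this half by invoking the proof of (4.9) in \cite{HY}, i.e.\ the differential-case Clunie lemma, where the decisive mechanism is that poles of $f^{(k)}$ \emph{are} poles of $f$, so all residual poles of $\Omega$ are automatically confined to coefficient singularities with multiplicities controlled by the zeros of the $b_j$ (where the leading-term domination of $B(f)$ may fail). In the difference setting poles of $f_k=f\circ L_k$ live at shifted locations, so this mechanism does not transfer verbatim; carrying it over is exactly the step your sketch labels ``the technical heart'' and then leaves open. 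Identifying the obstacle is not the same as overcoming it, so as written your proof of (\ref{Clunieeq1}) is incomplete.
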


Theorem~\ref{Clunie} is a difference analogue of the Clunie lemma over non-Archimedean fields (cf. \cite{HY}) .
R. G. Halburd and R. J. Korhonen \cite{HK} obtained a difference analogue of  the Clunie lemma over the field of complex numbers (cf. \cite{Clunie}).
Theorem~\ref{Clunie} has numerous applications in the study of non-Archimedean difference equations, and beyond. To state one of its applications, we need the following notation:

\begin{definition} A solution $f$ of $(\ref{Clun-dif-eq})$ is said to be admissible\label{admissible-sol} if
$f\in {\cal M}(\kappa)$ satisfies $(\ref{Clun-dif-eq})$ with
\begin{equation}\label{admis-def}
\sum_{i\in I}T(r,c_i)+
\sum_{j\in J}T(r,d_j)+\sum_{k=0}^qT(r,b_k)=o(T(r,f)),
\end{equation}
equivalently, the coefficients  of $B,\Phi,\Omega$ are slowly moving targets with respect to $f$.
\end{definition}

If all $c_i,d_j,b_k$ are rational functions, each transcendental meromorphic function $f$ over $\kappa$ must satisfy (\ref{admis-def}), which means that each transcendental meromorphic solution $f$ over $\kappa$ is admissible.

\begin{theorem}\label{Malmth}
If $\Phi$ is of the form
\begin{equation*}
    \Phi(f,f_1,...,f_n)=\Phi(f)=\sum_{j=0}^pd_jf^j,
\end{equation*}
and if
$(\ref{Clun-dif-eq})$ has an admissible non-constant meromorphic solution $f$, then
$$q=0, \quad p\leq \deg(\Omega).$$
\end{theorem}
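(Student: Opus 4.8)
The plan is to extract a numerical identity for $T(r,\Omega)$ from two opposite directions: Theorem~\ref{Clunie} will force the ``difference part'' $\Omega$ to have negligible characteristic, while a non-Archimedean Mokhon'ko (Valiron--Mokhon'ko) estimate will pin down $T(r,\cdot)$ of a rational expression in $f$ as (its degree)$\,\times T(r,f)$. Throughout I use the shift-invariance $T(r,f_i)=T(r,f)+O(1)$ (a consequence of $|a_i|=1$) and the admissibility \eqref{admis-def}, so that every coefficient built algebraically from the $b_k,d_j$ and from $1/b_q$ is a slowly moving target, i.e.\ has characteristic $o(T(r,f))$. As is standard (and in fact essential) in Malmquist-type statements, I take $B$ and $\Phi$ to be coprime in $\mathcal{M}(\kappa)[X]$, equivalently $\Phi/B$ in lowest terms; without this the claim fails, since $\Phi=B\cdot Q$ would make \eqref{Clun-dif-eq} a tautology for $\Omega=Q(f)$ with $q\ge 1$.

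To prove $q=0$, I argue by contradiction assuming $q\ge 1$ and divide in $\mathcal{M}(\kappa)[X]$:
\[
\Phi(X)=B(X)Q(X)+S(X),\qquad \deg S<\deg B=q .
\]
Since $\gcd(B,\Phi)=1$ one has $\gcd(B,S)=1$, and substituting into \eqref{Clun-dif-eq} yields
\[
B(f)\,\Psi=S(f),\qquad \Psi:=\Omega(f,f_1,\dots,f_n)-Q(f)=\widetilde\Omega(f,f_1,\dots,f_n),
\]
with $\widetilde\Omega=\Omega-Q(X_0)\in\mathcal{M}(\kappa)[X_0,\dots,X_n]$ and $S(f)$ a polynomial in $f$. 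Here $\deg B\ge \deg S$, so Theorem~\ref{Clunie} applies to this rewritten equation; because $S$ is a polynomial in $f$, both \eqref{Clunieeq} and \eqref{Clunieeq1} are available, and admissibility collapses their right-hand sides to $o(T(r,f))$, giving $T(r,\Psi)=m(r,\Psi)+N(r,\Psi)=o(T(r,f))$. On the other hand $\Psi=S(f)/B(f)$ is a rational function of $f$ which, since $\gcd(B,S)=1$ and $\deg S<\deg B=q$, has degree exactly $q$; the Mokhon'ko theorem then gives $T(r,\Psi)=q\,T(r,f)+o(T(r,f))$. Comparing, $q\,T(r,f)=o(T(r,f))$, forcing $q=0$ and contradicting $q\ge 1$. (The degenerate subcase $S\equiv 0$ means $B\mid\Phi$, which with $\gcd(B,\Phi)=1$ already makes $B$ constant.)

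With $q=0$ the equation becomes $b_0\,\Omega=\sum_{j=0}^{p}d_jf^j$, so $\Omega=\sum_{j=0}^{p}(d_j/b_0)f^j$ is a polynomial in $f$ alone of degree $p$ with slowly moving coefficients, whence the Mokhon'ko theorem gives $T(r,\Omega)=p\,T(r,f)+o(T(r,f))$. But $\Omega=\Omega(f,f_1,\dots,f_n)$ is a polynomial of total degree $\deg\Omega$ in $f$ and its shifts, so the elementary upper estimate (using the ultrametric inequality for $m$, subadditivity of $N$ over poles, and $T(r,f_i)=T(r,f)+O(1)$) gives $T(r,\Omega)\le \deg(\Omega)\,T(r,f)+o(T(r,f))$. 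Combining the two yields $p\le \deg(\Omega)$, the second conclusion.

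The step I expect to be the main obstacle is the \emph{lower} estimate $T(r,\Psi)\ge q\,T(r,f)-o(T(r,f))$: this cannot be obtained by crude pole-counting, since bounding $\sum_\nu N(r,1/(f-\alpha_\nu))$ from below would require a defect/second-main-theorem input, so one must invoke the full non-Archimedean Mokhon'ko theorem, whose coprimality hypothesis is exactly what the standing assumption $\gcd(B,\Phi)=1$ secures. A secondary point demanding care is the bookkeeping of admissibility: one must verify that the coefficients of $Q$ and $S$, produced by dividing by $b_q$, remain slowly moving, which follows from $T(r,1/b_q)=T(r,b_q)+O(1)=o(T(r,f))$.
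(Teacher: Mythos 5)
Your argument is essentially the paper's own proof: Euclidean division of $\Phi$ by $B$, an application of Theorem~\ref{Clunie} (both \eqref{Clunieeq} and \eqref{Clunieeq1}) to the rewritten equation to force $T\bigl(r,\Omega-Q(f)\bigr)=o(T(r,f))$, the non-Archimedean Valiron--Mokhon'ko identity (Theorem 2.12 of \cite{HY}) to evaluate $T\bigl(r,S(f)/B(f)\bigr)$ as $q\,T(r,f)+o(T(r,f))$, and, once $q=0$, the comparison of $T(r,\Omega)=p\,T(r,f)+o(T(r,f))$ with the elementary upper bound $\deg(\Omega)\,T(r,f)+o(T(r,f))$ coming from Lemma~\ref{LLD-mer}. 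The one substantive point where you go beyond the paper is your standing hypothesis $\gcd(B,\Phi)=1$: the paper never states it, yet its proof tacitly needs $\Phi_2/B$ to be nonzero and in lowest terms at the step $T(r,\Phi_2/B)=qT(r,f)+o(T(r,f))$, and your concern is justified, since without coprimality the statement as printed fails --- e.g.\ $B(X)=X$, $\Phi(X)=X$, $\Omega=X_1-X_0$, $L_1(z)=z+1$ gives the equation $f\cdot(f_1-f)=f$, which has the admissible non-constant solution $f(z)=z$ with $q=1$ --- so your explicit coprimality assumption and your handling of the subcase $S\equiv 0$ repair a genuine lacuna in the paper's argument rather than take a different route.
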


Theorem~\ref{Malmth}  is a difference analogue of a Malmquist-type theorem over non-Archimedean fields (cf. \cite{HY}) .
Malmquist-type theorems were obtained by Malmquist \cite{Ma}, Gackstatter-Laine \cite{GL}, Laine \cite{L1},
Toda \cite{To}, Yosida \cite{Yo} (or see He-Xiao \cite{HX}) for meromorphic functions on $\mathbb{C}$, and  Hu-Yang \cite{HY5} or \cite{HY6} for several complex variables.

\begin{corollary} \label{Malmth-cor}
Assume that the condition $({\rm A})$ holds such that the coefficients of $B,\Omega,\Phi$ are rational functions over $\kappa$, and such that $\Phi$ has the form in Theorem~\ref{Malmth} .
If $(\ref{Clun-dif-eq})$  has a transcendental meromorphic
solution $f$ over $\kappa$, then $\Phi/B$ is a polynomial
in $f$ of degree $\leq \deg(\Omega)$.
\end{corollary}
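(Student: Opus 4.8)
The plan is to derive the corollary as a direct specialization of Theorem~\ref{Malmth}, the only substantive point being to verify that the hypothesis of that theorem --- admissibility of the solution --- is automatic in the rational-coefficient setting. First I would invoke the remark immediately following Definition~\ref{admissible-sol}: when all the coefficients $c_i,d_j,b_k$ of $B,\Omega,\Phi$ are rational functions over $\kappa$, each is a slowly moving target relative to any transcendental $f$, so that the admissibility condition $(\ref{admis-def})$ holds for the given transcendental solution. Concretely, a rational function $g$ satisfies $T(r,g)=O(\log r)$, whereas $T(r,f)/\log r\to\infty$ as $r\to\infty$ for transcendental $f$; hence the left-hand side of $(\ref{admis-def})$ is $O(\log r)=o(T(r,f))$, and $f$ is an admissible non-constant meromorphic solution of $(\ref{Clun-dif-eq})$.

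With admissibility in hand, Theorem~\ref{Malmth} applies verbatim and yields the two conclusions $q=0$ and $p\leq\deg(\Omega)$. The first of these is exactly what converts the statement about the quotient $\Phi/B$ into a statement about a genuine polynomial: since $q=0$, the expansion $(\ref{Bfunc})$ collapses to $B(X)=b_0$, so that $B(f)=b_0$ is a nonzero rational function independent of the variable $X$. Dividing $\Phi(f)=\sum_{j=0}^p d_j f^j$ through by $b_0$ then gives
\begin{equation*}
\frac{\Phi}{B}=\frac{\Phi(f)}{b_0}=\sum_{j=0}^p\frac{d_j}{b_0}\,f^j,
\end{equation*}
a polynomial in $f$ with rational-function coefficients $d_j/b_0$. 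Because $b_q=b_0\not\equiv 0$ and $d_p\not\equiv 0$, its degree is exactly $p$, which by the second conclusion of Theorem~\ref{Malmth} satisfies $p\leq\deg(\Omega)$, completing the argument.

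The proof is thus essentially mechanical once Theorem~\ref{Malmth} is available, and the only step demanding care is the growth comparison underlying admissibility. I would expect this to be the ``main obstacle'' only in a bookkeeping sense: one must check that \emph{every} coefficient appearing in $B$, $\Omega$, and $\Phi$ is rational, so that each of the finitely many terms in $(\ref{admis-def})$ is $O(\log r)$, and that $f$ being transcendental forces $T(r,f)$ to dominate $\log r$. Both facts are standard in non-Archimedean Nevanlinna theory and are already recorded in the remark following Definition~\ref{admissible-sol}, so no new estimates are required beyond citing them.
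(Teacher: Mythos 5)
Your proposal is correct and follows exactly the route the paper intends: the paper gives no separate proof of Corollary~\ref{Malmth-cor}, treating it as an immediate consequence of Theorem~\ref{Malmth} together with the remark after Definition~\ref{admissible-sol} that rational coefficients make every transcendental solution admissible, and your argument (admissibility from $T(r,g)=O(\log r)$ versus $T(r,f)/\log r\to\infty$, then $q=0$ forcing $B(f)=b_0$ and $\Phi/B=\sum_{j=0}^p (d_j/b_0)f^j$ with $p\leq\deg(\Omega)$) fills in precisely these steps. No gaps; the only cosmetic point is that the claim the degree is \emph{exactly} $p$ is unneeded, since the corollary asserts only the bound $\deg\leq\deg(\Omega)$.
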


Corollary~\ref{Malmth-cor} is a difference analogue of
the non-Archimedean Malmquist-type theorem due to Boutabaa \cite{Boutabaa}.

\begin{theorem}\label{Mokho-thm}
Let $f\in \mathcal{M}(\kappa)$ be a non-constant admissible solution of
\begin{equation}\label{Mokho-eq}
    \Omega\left(f,f',...,f^{(n)}\right)=0,
\end{equation}
where the solution $f$ is called admissible if
$$\sum_{i\in I}T(r,c_i)=o(T(r,f)).$$
If a slowly moving target $a\in \mathcal{M}(\kappa)$ with respect to $f$, that is,
$$T(r,a)=o(T(r,f)),$$
does not satisfy the equation $(\ref{Mokho-eq})$, then
\begin{equation*}
    m\left(r,\frac{1}{f-a}\right)=o(T(r,f)).
\end{equation*}
\end{theorem}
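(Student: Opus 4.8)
The plan is to prove this as a non-Archimedean analogue of the classical Mokhon'ko-type estimate, using Theorem~\ref{Clunie} as the engine. The key observation is that the quantity $m(r,1/(f-a))$ measures how often $f$ comes close to the slowly moving target $a$, and that the differential-polynomial structure of $\Omega$ forces any such approach to be controlled by the coefficients $c_i$, which are themselves small relative to $T(r,f)$.

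First I would reduce to the case $a=0$ by substituting $g=f-a$. Writing $f=g+a$ and expanding, the equation $\Omega(f,f',\dots,f^{(n)})=0$ becomes a new equation $\widetilde\Omega(g,g',\dots,g^{(n)})=0$ whose coefficients are polynomial combinations of the $c_i$ and of $a$ together with its derivatives $a',\dots,a^{(n)}$; since $a$ is slowly moving, in the non-Archimedean setting its derivatives satisfy $T(r,a^{(k)})=o(T(r,f))$ as well (the logarithmic derivative behaves boundedly, $m(r,a^{(k)}/a)=O(1)$ over $\kappa$), so the new coefficients remain slowly moving with respect to $g$, and $T(r,g)=T(r,f)+o(T(r,f))$. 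The hypothesis that $a$ does not satisfy the equation guarantees that $\widetilde\Omega$ has a nonvanishing constant-type term, i.e. $\widetilde\Omega(0,0,\dots,0)\not\equiv 0$; this term is precisely what will play the role of the right-hand side.

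The main step is to isolate the lowest-order behavior near the zeros of $g$. I would split $\widetilde\Omega$ into the term $\beta:=\widetilde\Omega(0,\dots,0)$ (the sum of coefficients of the purely "constant" monomials) and the remaining terms, each of which contains at least one factor of $g,g',\dots,g^{(n)}$. From $\widetilde\Omega=0$ this yields
\begin{equation*}
-\beta=\sum_{\text{other }i}\tilde c_i\, g^{i_0}(g')^{i_1}\cdots (g^{(n)})^{i_n},
\end{equation*}
where every monomial on the right carries a positive power of $g$ or of some derivative of $g$. Dividing through by $g$ and using the non-Archimedean logarithmic derivative estimate $m(r,g^{(k)}/g)=O(1)$ (the key tool that is far cleaner over $\kappa$ than over $\mathbb{C}$), each term on the right is seen to satisfy
\begin{equation*}
m\!\left(r,\frac{1}{g}\cdot \tilde c_i\, g^{i_0}(g')^{i_1}\cdots (g^{(n)})^{i_n}\right)\leq m(r,\tilde c_i)+O(1),
\end{equation*}
because the surplus factors of $g$ and its derivatives only improve the proximity bound while the logarithmic-derivative factors contribute $O(1)$. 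Summing and applying the strong non-Archimedean triangle inequality (so $m$ of a sum is bounded by the maximum of the individual $m$'s) gives $m(r,\beta/g)\leq \sum m(r,\tilde c_i)+O(1)=o(T(r,f))$.

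To finish, I would convert $m(r,\beta/g)$ back to $m(r,1/g)=m(r,1/(f-a))$. Since $\beta$ is itself a slowly moving target (a polynomial combination of the $c_i$ and $a^{(k)}$), we have $m(r,1/g)\leq m(r,\beta/g)+m(r,1/\beta)=o(T(r,f))+o(T(r,f))$, giving the claim. The main obstacle I anticipate is the bookkeeping in the reduction step: one must check that expanding $\Omega(g+a,(g+a)',\dots)$ genuinely produces a nonzero $\beta$ (this is exactly where the hypothesis "$a$ does not satisfy the equation" is used, since $\beta=\Omega(a,a',\dots,a^{(n)})$) and that every other monomial truly retains a positive-degree factor in $g$ or its derivatives after regrouping. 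The logarithmic-derivative lemma over $\kappa$, which should be available from the symbols of \cite{HY}, does the analytic heavy lifting and replaces the error terms $S(r,f)$ of the complex theory by the clean bound $O(1)$.
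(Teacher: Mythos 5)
Your proposal follows essentially the same route as the paper's own proof: substitute $f=g+a$, split the resulting differential polynomial into the constant part $P=\Omega\left(a,a',\dots,a^{(n)}\right)$ (your $\beta$), which is not identically zero precisely because $a$ does not satisfy the equation and whose characteristic is $o(T(r,f))$ because $a$ is slowly moving, plus a part $\Psi$ all of whose terms have degree at least one in $g,g',\dots,g^{(n)}$; then divide by $g$, invoke the non-Archimedean lemma on the logarithmic derivative, and convert $m(r,\beta/g)=o(T(r,f))$ into $m(r,1/g)\leq m(r,\beta/g)+m(r,1/\beta)=o(T(r,f))$. All of this matches the paper's argument, including the use of the first main theorem to control $m(r,1/P)$.

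One step, however, is not literally true as you display it and needs the case split that the paper makes explicit. Your per-term inequality
\begin{equation*}
m\left(r,\frac{1}{g}\,\tilde c_i\, g^{i_0}(g')^{i_1}\cdots (g^{(n)})^{i_n}\right)\leq m(r,\tilde c_i)+O(1)
\end{equation*}
fails in general: rewriting the term as $\tilde c_i\, g^{d-1}(g'/g)^{i_1}\cdots(g^{(n)}/g)^{i_n}$ with $d=i_0+\cdots+i_n\geq 1$, multiplicativity of $\mu$ gives only $m(r,\cdot)\leq m(r,\tilde c_i)+(d-1)\,m(r,g)$, and $(d-1)\,m(r,g)$ can be comparable to $T(r,f)$ --- for instance the term $g\,g'$ divided by $g$ is $g'$, whose proximity function is as large as $m(r,g)$. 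The surplus powers of $g$ ``only improve the bound'' exactly when $\mu(r,g)\leq 1$, not unconditionally. The paper therefore first disposes of the other regime: if $|g(z)|\geq 1$ (equivalently $\mu(r,g)\geq 1$), then $m(r,1/g)=0$ and there is nothing to prove, so one may restrict to $|g(z)|<1$, where indeed $|g|^{d-1}\leq 1$ and each term of $\Psi/g$ is bounded by $|C_i|\,|g'/g|^{i_1}\cdots|g^{(n)}/g|^{i_n}$. With this two-line case distinction inserted before your per-term estimate, your argument closes and coincides with the paper's; the rest of your bookkeeping (that $\beta\not\equiv 0$ by the hypothesis on $a$, that $T\left(r,a^{(k)}\right)=O(T(r,a))=o(T(r,f))$, and that $m(r,1/\beta)\leq T(r,\beta)+O(1)=o(T(r,f))$) is correct.
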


Theorem~\ref{Mokho-thm} is an analogue of a result due to A. Z. Mokhon'ko and  V. D. Mokhon'ko \cite{MM} over non-Archimedean fields, which also has a difference analogue as follows:

\begin{theorem}\label{Mokho-FEthm}
Assume that the condition $({\rm A})$ holds. Let $f\in \mathcal{M}(\kappa)$ be a non-constant admissible solution of
\begin{equation}\label{Mokho-dif-eq}
    \Omega\left(f,f_1,...,f_n\right)=0,
\end{equation}
where the solution $f$ is called admissible if
$$\sum_{i\in I}T(r,c_i)=o(T(r,f)).$$
If a slowly moving target $a\in \mathcal{M}(\kappa)$ with respect to $f$
does not satisfy the equation $(\ref{Mokho-dif-eq})$, then
\begin{equation*}
    m\left(r,\frac{1}{f-a}\right)=o(T(r,f)).
\end{equation*}
\end{theorem}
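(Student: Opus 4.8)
\section*{Proof proposal}

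The plan is to reduce the estimate to the behaviour of the single function $g=f-a$ by substituting $f=g+a$ into the functional equation, and then to exploit the very strong non-Archimedean substitute for the logarithmic derivative lemma: composition with $L_i$ preserves the sup-norm on large circles. First I would record this norm identity. For $r>\max_i|b_i|$, the map $z\mapsto L_i(z)=a_iz+b_i$ with $|a_i|=1$ is a bijection of $\{|z|\le r\}$ onto itself carrying the circle $|z|=r$ onto itself; since the absolute value of a meromorphic function on a circle depends only on the radius, this gives $|h\circ L_i|_r=|h|_r$ for every $h\in\mathcal M(\kappa)$. Writing $g=f-a$, $g_i=g\circ L_i=f_i-\tilde a_i$ and $\tilde a_i=a\circ L_i$, one obtains $|g_i|_r=|g|_r$ and $m(r,g_i/g)=0$ for all large $r$. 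This is precisely where $|a_i|=1$ is essential, and it is what makes the argument short.

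Next I would substitute $f_i=g_i+\tilde a_i$ into $\Omega(f,f_1,\dots,f_n)=0$ and expand in the variables $g,g_1,\dots,g_n$. The term of total degree $0$ is exactly $\alpha:=\Omega(a,\tilde a_1,\dots,\tilde a_n)$, which is $\not\equiv0$ precisely because $a$ is assumed not to satisfy the equation; every other term carries at least one factor $g_j$ (with $g_0=g$) and has a coefficient $e_\lambda$ that is a polynomial in the $c_i$ and in $a,\tilde a_1,\dots,\tilde a_n$. Thus
\[
\alpha=-\sum_\lambda e_\lambda\, g^{k_0}g_1^{k_1}\cdots g_n^{k_n},\qquad |\lambda|=k_0+\cdots+k_n\ge 1.
\]

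I would then estimate on a circle $|z|=r$ avoiding the zeros and poles involved. Because $|g_j|_r=|g|_r$, each summand satisfies $|e_\lambda g^{k_0}g_1^{k_1}\cdots g_n^{k_n}|_r=|e_\lambda|_r\,|g|_r^{|\lambda|}$. On the set of radii where $|g|_r\le1$ we have $|g|_r^{|\lambda|}\le|g|_r$ since $|\lambda|\ge1$, so the ultrametric inequality yields $|\alpha|_r\le|g|_r\max_\lambda|e_\lambda|_r$, that is, $1/|g|_r\le\max_\lambda|e_\lambda|_r/|\alpha|_r$. Taking $\log^+$ and using $m(r,h_1h_2)\le m(r,h_1)+m(r,h_2)$ together with the behaviour of $\log^+$ under maxima gives $m(r,1/g)\le\sum_\lambda m(r,e_\lambda)+m(r,1/\alpha)$; on the complementary radii where $|g|_r>1$ the left side is $0$, so the inequality holds for all large $r$. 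Finally, admissibility $\sum_i T(r,c_i)=o(T(r,f))$ together with $T(r,a)=o(T(r,f))$ and $T(r,\tilde a_i)=T(r,a)+O(1)$ shows that each $e_\lambda$ and $\alpha$ is a slowly moving target, whence $m(r,e_\lambda)=o(T(r,f))$ and $m(r,1/\alpha)\le T(r,\alpha)+O(1)=o(T(r,f))$. This gives $m(r,1/(f-a))=m(r,1/g)=o(T(r,f))$, as claimed.

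The main obstacle is not the estimate itself but the bookkeeping behind the norm identity and the expansion: one must verify that $L_i$ genuinely permutes each large circle, and that the degree-zero part of the expanded polynomial is exactly $\Omega(a,\tilde a_1,\dots,\tilde a_n)$, so that the hypothesis that $a$ does not satisfy the equation supplies a nonzero $\alpha$ and hence a finite $m(r,1/\alpha)$ of size $o(T(r,f))$. Once these are in place, the ultrametric inequality does the rest, and the same scheme clearly covers Theorem~\ref{Mokho-thm} upon replacing each $f_i$ by $f^{(i)}$ and the norm identity by the corresponding non-Archimedean logarithmic derivative estimate.
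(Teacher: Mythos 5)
Your proposal is correct and takes essentially the same route as the paper: the paper proves Theorem~\ref{Mokho-thm} by substituting $f=g+a$, splitting the equation into a part $\Psi$ whose terms all have degree at least one in $g$ and a slowly moving remainder $P$ (your $-\alpha$, nonzero because $a$ does not satisfy the equation), and bounding $m(r,1/g)$ on the set where $|g|<1$ via the ultrametric inequality, then states that Theorem~\ref{Mokho-FEthm} follows by the same method with the difference analogue of the logarithmic-derivative estimate; your circle-norm identity $\mu(r,h\circ L_i)=\mu(r,h)$ is exactly the paper's Lemma~\ref{LLD-mer}. The only differences are cosmetic: you argue with the norms $\mu(r,\cdot)$ and absorb $\mu(r,g)^{|\lambda|}\leq\mu(r,g)$ directly, where the paper evaluates at a generic point $z$ with $|z|=r$ and uses $m\left(r,g_i/g\right)=0$.
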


A version of Theorem~\ref{Mokho-FEthm} over complex number field can be found in \cite{HK}.

\section{Difference analogue of the Lemma on the Logarithmic Derivative}

Take $a(\not=0),b\in\kappa$ and consider the linear transformation
\begin{equation*}
    L(z)=az+b
\end{equation*}
over $\kappa$. For a positive integer $m$, set
\begin{equation*}
    \Delta_Lf=f\circ L-f,\ \Delta^m_Lf=\Delta_L(\Delta_L^{m-1}f).
\end{equation*}

\begin{lemma}\label{LLD-ent}
Take $f\in \mathcal{A}(\kappa)$ and assume $|a|\leq 1$. When $r>|b|/|a|$, we have
\begin{equation*}
    \mu(r,f\circ L)\leq \mu(r,f).
\end{equation*}
Moreover, we obtain
\begin{equation*}
    \mu\left(r,\frac{f\circ L}{f}\right) \leq 1,\ \mu\left(r,\frac{\Delta_L^mf}{f}\right) \leq 1.
\end{equation*}
\end{lemma}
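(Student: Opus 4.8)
The plan is to exploit the fact that over a non-Archimedean field the maximum term $\mu(r,f)$ of an entire function coincides with its sup-norm on the sphere, $\mu(r,f)=\sup_{|z|=r}|f(z)|$, so that composing with $L$ reduces to controlling how $L$ moves the sphere $|z|=r$. First I would record the geometric input supplied by the ultrametric inequality: for $|z|=r$ with $r>|b|/|a|$ and $|a|\le 1$ we have $|az|=|a|r>|b|$, so the strong (isosceles) triangle inequality gives $|L(z)|=|az+b|=\max\{|az|,|b|\}=|a|r\le r$. Thus $L$ carries the sphere $|z|=r$ into the closed ball of radius $|a|r\le r$.

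Combining this with the max-modulus identity and the monotonicity of $r\mapsto\mu(r,f)$, I would conclude
\[
\mu(r,f\circ L)=\sup_{|z|=r}|f(L(z))|\le\sup_{|w|\le|a|r}|f(w)|=\mu(|a|r,f)\le\mu(r,f),
\]
which is the first assertion.

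The two quotient estimates then follow from the algebraic properties of $\mu$. Since the maximum term is multiplicative and its fractional extension to $\mathcal{M}(\kappa)$ satisfies $\mu(r,g/h)=\mu(r,g)/\mu(r,h)$, the bound just proved gives $\mu(r,(f\circ L)/f)=\mu(r,f\circ L)/\mu(r,f)\le 1$ immediately. For the difference operator I would argue by induction on $m$: the ultrametric property $\mu(r,g_1-g_2)\le\max\{\mu(r,g_1),\mu(r,g_2)\}$ together with the first assertion applied to $g=\Delta_L^{m-1}f$ yields $\mu(r,\Delta_L^m f)=\mu(r,g\circ L-g)\le\max\{\mu(r,g\circ L),\mu(r,g)\}\le\mu(r,g)$, so that $\mu(r,\Delta_L^m f)\le\mu(r,f)$; dividing by $\mu(r,f)$ finishes the proof.

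I do not expect a serious obstacle here: the whole argument rests on the single non-Archimedean phenomenon that a linear map with $|a|\le 1$ cannot expand a sufficiently large sphere. The only points requiring care are the hypothesis $r>|b|/|a|$, which is exactly what makes the strong triangle inequality an equality rather than a weak bound, and the observation that the \emph{same} transformation $L$ is reused at every stage of the induction, so the radius condition is preserved throughout.
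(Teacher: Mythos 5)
Your proof is correct and follows essentially the same route as the paper's: both hinge on the strong triangle inequality giving $|L(z)|=|az+b|=|a|r\le r$ for $|z|=r>|b|/|a|$, followed by the multiplicativity of $\mu$ for the quotient $(f\circ L)/f$ and the ultrametric bound $\mu(r,g_1-g_2)\le\max\{\mu(r,g_1),\mu(r,g_2)\}$ with induction for $\Delta_L^m f$. The one point the paper makes explicit that you gloss over is that the identity $\mu(r,f)=\sup_{|z|=r}|f(z)|$ (equivalently, the paper's evaluation $\mu(r,f\circ L)=|f(L(z))|$ at a generic point) is only available for $r\in|\kappa|$, the sphere being empty otherwise; the extension to all $r>|b|/|a|$ requires the density of $|\kappa|$ in $[0,\infty)$ together with continuity of $r\mapsto\mu(r,f)$, exactly as in the paper's closing sentence.
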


\begin{proof}
We can write
\begin{equation*}
    f(z)=\sum_{n=0}^\infty a_nz^n
\end{equation*}
since $f\in \mathcal{A}(\kappa)$. Therefore
\begin{equation*}
    f(L(z))=\sum_{n=0}^\infty a_n(az+b)^n.
\end{equation*}

First of all, we take $r\in|\kappa|$, that is, $r=|z|$ for some $z\in \kappa$. When $r>|b|/|a|$, we find (cf. \cite{HY})
\begin{equation*}
    \mu(r,f\circ L)=|f(L(z))|\leq \max_{n\geq 0}|a_n||az+b|^n=\max_{n\geq 0}|a_n||az|^n\leq \max_{n\geq 0}|a_n||z|^n=\mu(r,f).
\end{equation*}
In particular,
\begin{equation*}
    \mu\left(r,\frac{f\circ L}{f}\right)=\frac{ \mu(r,f\circ L)}{\mu(r,f)} \leq 1,
\end{equation*}
and hence
\begin{equation*}
    \mu\left(r,\frac{\Delta_Lf}{f}\right)=\frac{ \mu(r,f\circ L-f)}{\mu(r,f)}\leq\frac{1}{\mu(r,f)} \max\{ \mu(r,f\circ L), \mu(r,f)\}\leq 1.
\end{equation*}
By induction, we can prove
\begin{equation*}
    \mu\left(r,\frac{\Delta_L^mf}{f}\right)\leq 1.
\end{equation*}
Since $|\kappa|$ is dense in $\mathbb{R}_+=[0,\infty)$, by using continuity we easily see that these inequalities hold for all $r>|b|/|a|$.
\end{proof}

Note that (cf. \cite{HY})
\begin{equation}
    m(r,f)=\log^+\mu(r,f)=\max\{0,\log\mu(r,f)\}.
\end{equation}
Lemma~\ref{LLD-ent} implies immediately the following difference analogue of the Lemma on the Logarithmic Derivative:

\begin{corollary}
Take $f\in \mathcal{A}(\kappa)$ and assume $|a|\leq 1$. When $r>|b|/|a|$, we have
\begin{equation*}
    m\left(r,\frac{f\circ L}{f}\right) =0,\ m\left(r,\frac{\Delta_L^mf}{f}\right) =0.
\end{equation*}
\end{corollary}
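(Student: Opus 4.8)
The plan is to read the corollary off directly from Lemma~\ref{LLD-ent}, using only the defining relation between the proximity function and the maximum term. For any non-Archimedean meromorphic $g$ one has $m(r,g)=\log^+\mu(r,g)=\max\{0,\log\mu(r,g)\}$, as recorded just above the corollary, so the essential observation is that $m(r,g)=0$ precisely when $\mu(r,g)\leq 1$. Thus the entire task is to convert the two multiplicative bounds furnished by the lemma into additive statements.

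First I would set $g=(f\circ L)/f$. Under the standing hypotheses $f\in\mathcal{A}(\kappa)$ and $|a|\leq 1$, Lemma~\ref{LLD-ent} gives $\mu(r,(f\circ L)/f)\leq 1$ for every $r>|b|/|a|$. Taking logarithms yields $\log\mu(r,(f\circ L)/f)\leq 0$, hence $\log^+\mu(r,(f\circ L)/f)=0$, which is exactly $m(r,(f\circ L)/f)=0$ on that range. I would then repeat the argument verbatim with $g=\Delta_L^m f/f$: the lemma supplies the companion bound $\mu(r,\Delta_L^m f/f)\leq 1$ for $r>|b|/|a|$, and the same $\log^+$ computation gives $m(r,\Delta_L^m f/f)=0$. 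Since both conclusions hold for all $r>|b|/|a|$, the corollary is established.

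I do not anticipate any genuine obstacle here, and indeed the excerpt itself flags that the corollary follows \emph{immediately}: all the analytic content is already packaged in Lemma~\ref{LLD-ent}, and the corollary merely translates it into the language of the proximity function. The only point worth isolating is that the identity $m=\log^+\mu$ is precisely the device that turns the multiplicative estimate $\mu\leq 1$ into the additive vanishing $m=0$, so no further estimates, induction, or case analysis beyond what Lemma~\ref{LLD-ent} already provides are required.
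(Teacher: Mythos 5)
Your proposal is correct and matches the paper exactly: the paper records the identity $m(r,f)=\log^+\mu(r,f)$ and then states that the corollary follows immediately from Lemma~\ref{LLD-ent}, which is precisely your argument of converting the bounds $\mu\leq 1$ into $m=0$ via $\log^+$.
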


\begin{lemma}\label{LLD-mer}
Take $f\in \mathcal{M}(\kappa)-\{0\}$ and assume $|a|=1$. When $r>|b|$, we have
\begin{equation}\label{Inv-form}
    \mu(r,f\circ L)= \mu(r,f).
\end{equation}
Moreover, we obtain
\begin{equation*}
    \mu\left(r,\frac{f\circ L}{f}\right) = 1,\ \mu\left(r,\frac{\Delta_L^mf}{f}\right) \leq 1.
\end{equation*}
\end{lemma}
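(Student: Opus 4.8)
The plan is to reduce the meromorphic statement to the entire case of Lemma~\ref{LLD-ent}, using two structural features of the maximum term over a non-Archimedean field: its multiplicativity $\mu(r,fg)=\mu(r,f)\mu(r,g)$, so that $\mu$ of a quotient is the quotient of the two $\mu$'s (this is precisely the fractional extension of $\mu$ to $\mathcal{M}(\kappa)$), and the ultrametric bound $\mu(r,f\pm g)\leq\max\{\mu(r,f),\mu(r,g)\}$. The first task is to upgrade the entire estimate of Lemma~\ref{LLD-ent} to an equality: in the computation there, for $f(z)=\sum_n a_nz^n$ and $r>|b|/|a|$ one has $\mu(r,f\circ L)=\max_n|a_n||az+b|^n=\max_n|a_n||az|^n$, and when $|a|=1$ we have $|az|=|z|=r$, so the last expression equals $\max_n|a_n|r^n=\mu(r,f)$. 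Hence $\mu(r,f\circ L)=\mu(r,f)$ for entire $f$ as soon as $|a|=1$ and $r>|b|$.

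Passing to $f\in\mathcal{M}(\kappa)-\{0\}$, I would write $f=g/h$ with $g,h\in\mathcal{A}(\kappa)$; then $f\circ L=(g\circ L)/(h\circ L)$, and multiplicativity of $\mu$ yields $\mu(r,f\circ L)=\mu(r,g\circ L)/\mu(r,h\circ L)=\mu(r,g)/\mu(r,h)=\mu(r,f)$, which is (\ref{Inv-form}). The first moreover-assertion is then immediate, since $\mu(r,(f\circ L)/f)=\mu(r,f\circ L)/\mu(r,f)=1$. For the difference quotients I would induct on $m$. For $m=1$, writing $\Delta_Lf/f=(f\circ L)/f-1$ and combining $\mu(r,(f\circ L)/f)=1$ with $\mu(r,1)=1$ through the ultrametric bound gives $\mu(r,\Delta_Lf/f)\leq1$. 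For the step, set $g=\Delta_L^{m-1}f$; if $g\equiv0$ then $\Delta_L^mf\equiv0$ and the bound is trivial, so assume $g\not\equiv0$ and factor
$$\frac{\Delta_L^mf}{f}=\frac{g\circ L}{g}\cdot\frac{g}{f}-\frac{g}{f}.$$
Applying (\ref{Inv-form}) to the nonzero meromorphic function $g$ makes $\mu$ of the first factor equal to $1$, the induction hypothesis gives $\mu(r,g/f)\leq1$, and multiplicativity together with the ultrametric bound then force $\mu(r,\Delta_L^mf/f)\leq1$.

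The argument is largely mechanical once Lemma~\ref{LLD-ent} is available; the one point needing attention is this inductive step, where the invariance (\ref{Inv-form}) must be invoked not for $f$ itself but for each iterated difference $g=\Delta_L^{m-1}f$. This is legitimate because the $\Delta_L^{m-1}f$ are again nonzero meromorphic functions and the transformation $L$ (hence the hypothesis $|a|=1$) is unchanged throughout, so (\ref{Inv-form}) applies to them verbatim.
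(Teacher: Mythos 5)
Your proposal is correct and takes essentially the same route as the paper: write $f=g/h$ with $g,h\in\mathcal{A}(\kappa)$, use multiplicativity of $\mu$, and exploit that $|a|=1$ and $r>|b|$ force $|L(z)|=|z|=r$, so $\mu(r,g\circ L)=\mu(r,g)$ for entire $g$ (with the tacit restriction to $r\in|\kappa|$ and the continuity/density extension, exactly as in Lemma~\ref{LLD-ent}). The only difference is that you make explicit what the paper leaves implicit, namely the induction giving $\mu\left(r,\Delta_L^mf/f\right)\leq 1$ --- including the degenerate case $\Delta_L^{m-1}f\equiv 0$ and the observation that (\ref{Inv-form}) must be applied to each iterated difference $\Delta_L^{m-1}f$ rather than to $f$ itself --- which is a sound and worthwhile completion of the argument.
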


\begin{proof}
Since $f\in \mathcal{M}(\kappa)-\{0\}$, there are $g,h(\not=0)\in \mathcal{A}(\kappa)$ with $f=\frac{g}{h}$. Thus   (cf. \cite{HY})
\begin{equation}\label{Inv-form2}
    \mu(r,f\circ L)=\frac{\mu(r,g\circ L)}{\mu(r,h\circ L)}.
\end{equation}
Take $r\in|\kappa|$. Since $|a|=1$, we have
\begin{equation*}
    |L(z)|=|az+b|=|z|=r
\end{equation*}
when $r>|b|$, and so
\begin{equation*}
    \mu(r,g\circ L)=\mu(r,g).
\end{equation*}
Similarly, we have $\mu(r,h\circ L)=\mu(r,h)$. Thus the formula (\ref{Inv-form}) holds.  By using continuity we easily see that the inequality holds for all $r>|b|$.
\end{proof}

\begin{corollary}
Take $f\in \mathcal{M}(\kappa)-\{0\}$ and assume $|a|=1$. When $r>|b|$, we have
\begin{equation*}
    m\left(r,\frac{f\circ L}{f}\right) = 0,\ m\left(r,\frac{\Delta_L^mf}{f}\right) =0.
\end{equation*}
\end{corollary}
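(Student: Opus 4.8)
The plan is to obtain both equalities directly from Lemma~\ref{LLD-mer} together with the identity $m(r,g)=\log^+\mu(r,g)=\max\{0,\log\mu(r,g)\}$ recorded just above. Since $\log^+$ vanishes precisely on the interval $[0,1]$, every statement of the form $\mu\leq 1$ translates into $m=0$, so no further estimation is needed once the $\mu$-bounds of Lemma~\ref{LLD-mer} are in hand.

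Concretely, for the first equality I would take $g=(f\circ L)/f$. Lemma~\ref{LLD-mer} gives $\mu(r,g)=1$ whenever $r>|b|$ (using $|a|=1$ and $f\not\equiv 0$), whence $m(r,(f\circ L)/f)=\log^+1=0$. For the second equality I would take $g=\Delta_L^mf/f$; the same lemma yields $\mu(r,g)\leq 1$ for $r>|b|$, and since $\log^+$ is zero on $[0,1]$ this gives $m(r,\Delta_L^mf/f)=\log^+\mu(r,\Delta_L^mf/f)=0$.

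There is no genuine obstacle here: the entire analytic content---the non-Archimedean invariance $\mu(r,f\circ L)=\mu(r,f)$ under an isometric linear change $L(z)=az+b$ with $|a|=1$, together with the resulting control of the difference quotients---has already been secured in Lemma~\ref{LLD-mer}. The corollary is merely the passage from the multiplicative quantity $\mu$ to the additive proximity function $m$ through $\log^+$, which is automatic. The only point worth flagging is that one must invoke the lemma in the correct range $r>|b|$ and under its hypotheses $|a|=1$ and $f\not\equiv 0$; outside that range the invariance can fail, since $|az+b|$ need no longer equal $|z|$.
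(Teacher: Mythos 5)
Your proof is correct and is essentially the paper's own argument: the corollary is stated as an immediate consequence of Lemma~\ref{LLD-mer} via the identity $m(r,g)=\log^{+}\mu(r,g)$ recorded before the first corollary, exactly as you do. Your closing remark about respecting the range $r>|b|$ and the hypotheses $|a|=1$, $f\not\equiv 0$ is a fair caveat but adds nothing beyond what the lemma already requires.
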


\section{Proof of Theorem~\ref{Clunie}}

To prove (\ref{Clunieeq}), take $z\in\kappa$ with
$$f(z)\not=0,\infty;\quad b_k(z)\not=0,\infty\quad (0\leq k\leq q); $$
$$ c_i(z)\not=0,\infty\quad (i\in I);
\quad d_j(z)\not=0,\infty\quad (j\in J).$$
Write
$$b(z)=\max_{0\leq k< q}\left\{1,\left(\frac{|b_k(z)|}{|b_q(z)|}\right)^{
\frac{1}{q-k}}\right\}.$$
If $|f(z)|>b(z)$, we have
$$|b_k(z)||f(z)|^k\leq |b_q(z)|b(z)^{q-k}|f(z)|^k
<|b_q(z)||f(z)|^q,$$
and hence
$$|B(f)(z)|=|b_q(z)||f(z)|^q.$$
Then
$$|\Omega\left(f,f_1,...,f_n\right)(z)|=\frac{|\Phi(f,f_1,...,f_n)(z)|}{|B(f)(z)|}
\leq \frac{1}{|b_q(z)|}\max_{j\in J}|d_j(z)|\left|\frac{f_1(z)}{f(z)}\right|^{j_1}\cdots\left|\frac{f_n(z)}{f(z)}\right|^{j_n} .$$
If $|f(z)|\leq b(z)$,
$$|\Omega(f,f_1,...,f_n)(z)|\leq b(z)^{\deg(\Omega)}\max_{i\in I}
|c_i(z)|\left|\frac{f_1(z)}{f(z)}\right|^{i_1}\cdots
\left|\frac{f_n(z)}{f(z)}\right|^{i_n}.$$
Therefore, in any case, the inequality
\begin{eqnarray*}
\mu(r,\Omega)&\leq& \max_{j\in J,i\in I}\left\{\frac{\mu(r,d_j)}
{\mu(r,b_q)}\prod_{k=1}^n\mu\left(r,\frac{f_k}{f}\right)^{j_k},\mu(r,c_i)\prod_{k=1}^n\mu\left(r,\frac{f_k}{f}\right)^{i_k}\right.\\
& & \cdot\max_{0\leq k<q}
\left.\left\{1,\mu\left(r,\frac{b_k}{b_q}\right)^{\frac{\deg(\Omega)}{q-k}}
\right\}\right\}
\end{eqnarray*}
holds where $r=|z|$,
which also holds for all $r>0$ by continuity of the functions $\mu$. By using Lemma~\ref{LLD-mer}, we find
\begin{equation*}
 \mu(r,\Omega)\leq    \max_{j\in J,i\in I}\left\{\frac{\mu(r,d_j)}{\mu(r,b_q)}, \mu(r,c_i) \cdot\max_{0\leq k<q}
\left\{1,\mu\left(r,\frac{b_k}{b_q}\right)^{\frac{\deg(\Omega)}{q-k}}
\right\}\right\},
\end{equation*}
and hence (\ref{Clunieeq}) follows from this inequality.

According to the proof of (4.9) in \cite{HY}, we easily obtain the inequality (\ref{Clunieeq1}).

\section{Proof of Theorem~\ref{Malmth} }

By using the algorithm of division, we have
$$\Phi(f)=\Phi_1(f)B(f)+\Phi_2(f)$$
with $\deg(\Phi_2)<q$.
Thus, the equation $(\ref{Clun-dif-eq})$ can be rewritten as follows:
\begin{equation}\label{New-Alg-div-eq}
\Omega\left(f,f_1,...,f_n\right)-\Phi_1(f)=\frac{\Phi_2(f)}{B(f)}.
\end{equation}
Applying Theorem~\ref{Clunie} to this equation, we obtain
$$m(r,\Omega-\Phi_1)=o(T(r,f)),$$
\begin{equation*}
N(r,\Omega-\Phi_1)=o(T(r,f)),
\end{equation*}
and hence
\begin{equation*}
    T(r,\Omega-\Phi_1)=o(T(r,f)).
\end{equation*}
Theorem 2.12 due to Hu-Yang \cite{HY} implies
$$T(r,\Omega-\Phi_1)=T\left(r,\frac{\Phi_2}{B}\right)=qT(r,f)+o(T(r,f)).$$
It follows that $q=0$, and $(\ref{Clun-dif-eq})$ assumes the following
form
$$\Omega\left(f,f_1,...,f_n\right)=\Phi(f).$$
Thus, Theorem 2.12 in \cite{HY} implies
\begin{equation}\label{Opestim}
T(r,\Omega)=T(r,\Phi)=pT(r,f)+o(T(r,f)).
\end{equation}

On other hand, it is easy to find the following eastimate
\begin{equation}
N(r,\Omega)\leq \deg(\Omega)N(r,f)+\sum_{i\in I}N(r,c_i).
\end{equation}
Obviously, we also have
\begin{equation}
m(r,\Omega)\leq \deg(\Omega)m(r,f)
+\max_{i\in I}\left\{m(r,c_i)+\sum_{\alpha=1}^ni_{\alpha}m\left(r,
\frac{f_\alpha}{f}\right)\right\}.
\end{equation}
By Lemma~\ref{LLD-mer}, we obtain
\begin{equation}\label{Opestim1}
T(r,\Omega)\leq \deg(\Omega)T(r,f)
+\sum_{i\in I}T(r,c_i)+O(1).
\end{equation}
Our result follows from  (\ref{Opestim}) and (\ref{Opestim1}).

\section{Proof of Theorem~\ref{Mokho-thm} ,~\ref{Mokho-FEthm}}

By substituting $f=g+a$ into (\ref{Mokho-eq}), we obtain
\begin{equation*}
    \Psi+P=0,
\end{equation*}
where
\begin{equation*}
    \Psi\left(g,g',...,g^{(n)}\right)=\sum_iC_ig^{i_0}(g')^{i_1}\cdots(g^{(n)})^{i_n}
\end{equation*}
is a differential polynomial of $g$ such that all of its terms are at least of degree one, and
$$T(r,P)=o(T(r,f)).$$
Also $P\not\equiv 0$, since $a$ does not satisfy  (\ref{Mokho-eq}).

Take $z\in\kappa$ with
$$g(z)\not=0,\infty;\ C_i(z)\not=\infty;\ P(z)\not=0,\infty.$$
Set $r=|z|$. If $|g(z)|\geq 1$, then
\begin{equation*}
    m\left(r,\frac{1}{g}\right)=\max\left\{0,\log\frac{1}{|g(z)|}\right\}=0.
\end{equation*}
It is therefore sufficient to consider only the case $|g(z)|< 1$. But then,
\begin{eqnarray*}
  \left|\frac{\Psi\left(g(z),g'(z),...,g^{(n)}(z)\right)}{g(z)}\right| &=& \frac{1}{|g(z)|} \left|\sum_iC_i(z)g(z)^{i_0}g'(z)^{i_1}\cdots g^{(n)}(z)^{i_n}\right|\\
  &\leq& \max_i|C_i(z)|\left|\frac{g'(z)}{g(z)}\right|^{i_1}\cdots \left|\frac{g^{(n)}(z)}{g(z)}\right|^{i_n}
\end{eqnarray*}
since $i_0+\cdots i_n\geq 1$ for all $i$. Therefore,
\begin{eqnarray*}
  m\left(r,\frac{1}{g}\right) &=& \log \frac{1}{|g(z)|}=\log \frac{|P(z)|}{|g(z)|} +\log \frac{1}{|P(z)|} \\
  &=& \log \frac{|\Psi\left(g(z),g'(z),...,g^{(n)}(z)\right)|}{|g(z)|} +\log \frac{1}{|P(z)|}\\
  &\leq& \sum_i\left\{m(r,C_i)+i_1m\left(r,\frac{g'}{g}\right) +\cdots +i_nm\left(r,\frac{g^{(n)}}{g}\right) \right\} +m\left(r,\frac{1}{P}\right) \\
  &=&o(T(r,f)).
\end{eqnarray*}
Since $g=f-a$, the assertion follows.

Obviously, according to the method above, we can prove Theorem~\ref{Mokho-FEthm} similarly.

\section{Final notes}

We will use the following assumption:
\begin{description}
  \item[(B)]  Fix a positive integer $n$. Take $a_i,b_i$ in $\kappa$ such that $|a_i|=1$ for each $i=1,...,n$, and such that
  \begin{equation*}
    L_i(z)=a_iz+b_i\ (i=1,...,n)
  \end{equation*}
  satisfy $L_i(z)\not=z$ for each $i=1,...,n$. Let $f$ be a non-constant meromorphic function over $\kappa$ and let $\{f_1,...,f_m\}$ be a finite set consisting of the forms $\Delta_{ L_i}^jf$. Take
  \begin{equation*}
B\in \mathcal{M}(\kappa)[f];\ \Omega,\Phi\in\mathcal{M}(\kappa)[f,f_1,...,f_m].
\end{equation*}
\end{description}

According to the methods in this paper, we can prove easily the following results:

\begin{theorem}\label{Clunie-dif}
Assume that the condition $({\rm B})$ holds.
If $f$ is a solution of the following equation
\begin{equation}\label{Clun-dif-eq2}
    B(f)\Omega(f,f_1,...,f_m)=\Phi(f,f_1,...,f_m)
\end{equation}
with $\deg B\geq \deg \Phi$, then
\begin{equation}\label{Clunieeq2}
m(r,\Omega)\leq \sum_{i\in I}m(r,c_i)+
\sum_{j\in J}m(r,d_j)+lm\left(r,\frac{1}{b_q}\right)+
l\sum_{j=0}^qm(r,b_j),
\end{equation}
where $l=\max\{1,\deg\Omega\}$, $\Omega=\Omega(f,f_1,...,f_m)$. Further, if $\Phi$ is a polynomial of $f$, we also have
\begin{equation}\label{Clunieeq-N2}
N(r,\Omega)\leq \sum_{i\in I}N(r,c_i)+
\sum_{j\in J}N(r,d_j)+O\left(\sum_{j=0}^qN\left(r,\frac{1}{b_j}\right)\right).
\end{equation}
\end{theorem}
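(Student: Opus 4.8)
The plan is to transcribe the proof of Theorem~\ref{Clunie} almost verbatim, the only structural change being that the shifts $f_i=f\circ L_i$ appearing there are replaced by the difference forms $f_k=\Delta_{L_i}^j f$ supplied by condition (B). First I would expand $B(f)=\sum_{k=0}^q b_k f^k$ as in (\ref{Bfunc}) and write $\Omega$ and $\Phi$ as finite sums of monomials in $f,f_1,\dots,f_m$ with meromorphic coefficients $c_i$ and $d_j$, paralleling (\ref{diffop})--(\ref{diffopII}). Then I would fix a point $z\in\kappa$ at which $f$, every $b_k$, every $c_i$ and every $d_j$ is finite and nonzero, set $r=|z|$, and introduce the threshold
\[
b(z)=\max_{0\le k<q}\left\{1,\left(\frac{|b_k(z)|}{|b_q(z)|}\right)^{\frac{1}{q-k}}\right\}
\]
exactly as before.

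Next I would run the same dichotomy. When $|f(z)|>b(z)$, the non-Archimedean triangle inequality forces $|B(f)(z)|=|b_q(z)|\,|f(z)|^q$, and combining $B(f)\Omega=\Phi$ with $\deg B\ge\deg\Phi$ bounds $|\Omega(z)|$ by $|b_q(z)|^{-1}\max_{j\in J}|d_j(z)|\prod_{\alpha}|f_\alpha(z)/f(z)|^{j_\alpha}$; when $|f(z)|\le b(z)$, one instead bounds $|\Omega(z)|$ by $b(z)^{\deg\Omega}\max_{i\in I}|c_i(z)|\prod_\alpha|f_\alpha(z)/f(z)|^{i_\alpha}$. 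Passing to the maximum-term functions, using density of $|\kappa|$ in $\mathbb{R}_+$ and continuity, these pointwise estimates yield the same master inequality for $\mu(r,\Omega)$ as in the proof of Theorem~\ref{Clunie}, now with each $f_\alpha=\Delta_{L_i}^j f$.

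The key step, and the only place where condition (B) is used differently from condition (A), is the control of the quotients $\mu(r,f_\alpha/f)$. Each $f_\alpha$ is a difference form $\Delta_{L_i}^j f$ with $|a_i|=1$, so the second inequality of Lemma~\ref{LLD-mer} gives $\mu(r,\Delta_{L_i}^j f/f)\le 1$ for all $r>|b_i|$; since $\{f_1,\dots,f_m\}$ is finite, choosing $r>\max_i|b_i|$ makes all these bounds hold simultaneously, which is harmless for the asymptotic conclusion. Substituting $\mu(r,f_\alpha/f)\le 1$ collapses the master inequality to
\[
\mu(r,\Omega)\le\max_{j\in J,\,i\in I}\left\{\frac{\mu(r,d_j)}{\mu(r,b_q)},\ \mu(r,c_i)\max_{0\le k<q}\left\{1,\mu\left(r,\frac{b_k}{b_q}\right)^{\frac{\deg\Omega}{q-k}}\right\}\right\},
\]
and applying $m(r,\cdot)=\log^+\mu(r,\cdot)$ together with the elementary estimates for $m$ of a sum and of a quotient yields (\ref{Clunieeq2}).

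I do not anticipate a genuine obstacle: the whole argument is a verbatim copy of the proof of Theorem~\ref{Clunie} once one observes that difference forms obey the same logarithmic-derivative-type bound $\mu(r,\cdot/f)\le 1$ as pure shifts. The mildest point to watch is merely that each $\Delta_{L_i}^j f$ is governed by its own threshold $r>|b_i|$, which is handled by taking $r$ beyond the finite maximum. Finally, when $\Phi$ is a polynomial in $f$, the valence inequality (\ref{Clunieeq-N2}) follows from the pole-counting argument behind (4.9) in \cite{HY}, which estimates the poles of $\Omega$ in terms of those of $f$ and of the coefficients and is insensitive to whether the $f_\alpha$ are shifts or difference forms.
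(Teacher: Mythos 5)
Your proposal is correct and is precisely what the paper intends: for Theorem~\ref{Clunie-dif} the authors give no separate argument but state that the methods of the paper apply, and your verbatim transcription of the proof of Theorem~\ref{Clunie} --- with the shifts replaced by difference forms and the bound $\mu\left(r,\Delta_{L_i}^j f/f\right)\leq 1$ from Lemma~\ref{LLD-mer} doing the work, valid uniformly once $r>\max_i|b_i|$ --- is exactly the intended adaptation. Your handling of the valence estimate via the pole-counting argument behind (4.9) in \cite{HY} likewise matches the paper's treatment.
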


\begin{theorem}\label{Malmth-dif}
If $\Phi$ is of the form
\begin{equation*}
    \Phi(f,f_1,...,f_m)=\Phi(f)=\sum_{j=0}^pd_jf^j,
\end{equation*}
and if
$(\ref{Clun-dif-eq2})$ has an admissible non-constant meromorphic solution $f$, then
$$q=0, \quad p\leq \deg(\Omega).$$
\end{theorem}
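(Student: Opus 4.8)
The plan is to follow the proof of Theorem~\ref{Malmth} line by line, with condition (A) replaced by condition (B), Theorem~\ref{Clunie} replaced by Theorem~\ref{Clunie-dif}, and the shifts $f_i=f\circ L_i$ replaced by the difference operators $f_i=\Delta_{L_i}^jf$. The only inputs peculiar to the difference setting are the invariance $\mu(r,f\circ L)=\mu(r,f)$ for $|a|=1$ and the resulting identity $m\left(r,\Delta_{L_i}^jf/f\right)=0$ for $r>|b_i|$, both supplied by Lemma~\ref{LLD-mer} and its corollary; these are precisely the substitutes for the logarithmic-derivative estimates used in the differential case.

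Concretely, since $\Phi(f)=\sum_{j=0}^pd_jf^j$ is a polynomial in $f$ alone, I would first apply the division algorithm in $\mathcal{M}(\kappa)[f]$ to write $\Phi(f)=\Phi_1(f)B(f)+\Phi_2(f)$ with $\deg\Phi_2<q$, and recast $(\ref{Clun-dif-eq2})$ as
\[
\Omega(f,f_1,\dots,f_m)-\Phi_1(f)=\frac{\Phi_2(f)}{B(f)}.
\]
Because $\deg B=q>\deg\Phi_2$ and $\Phi_2$ is a polynomial in $f$, Theorem~\ref{Clunie-dif} applies and yields both $m(r,\Omega-\Phi_1)=o(T(r,f))$ and $N(r,\Omega-\Phi_1)=o(T(r,f))$, hence $T(r,\Omega-\Phi_1)=o(T(r,f))$. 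On the other hand, evaluating the right-hand side by Theorem 2.12 of Hu--Yang \cite{HY} gives $T(r,\Phi_2/B)=qT(r,f)+o(T(r,f))$. Comparing the two forces $q=0$, after which $(\ref{Clun-dif-eq2})$ becomes $\Omega(f,f_1,\dots,f_m)=\Phi(f)$, so that Theorem 2.12 of \cite{HY} gives $T(r,\Omega)=pT(r,f)+o(T(r,f))$.

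It then remains to produce the reverse estimate $T(r,\Omega)\le\deg(\Omega)T(r,f)+o(T(r,f))$. The proximity half is clean: writing $\Omega=\sum_{i\in I}c_if^{i_0}f_1^{i_1}\cdots f_m^{i_m}$ and invoking the corollary $m\left(r,\Delta_{L_i}^jf/f\right)=0$ of Lemma~\ref{LLD-mer}, one gets
\[
m(r,\Omega)\le\deg(\Omega)\,m(r,f)+\max_{i\in I}\Big\{m(r,c_i)+\sum_{\alpha=1}^m i_\alpha\,m\left(r,\tfrac{f_\alpha}{f}\right)\Big\}=\deg(\Omega)\,m(r,f)+o(T(r,f)).
\]
Combining this with $T(r,\Omega)=pT(r,f)+o(T(r,f))$ would then give $p\le\deg(\Omega)$ once the valence half is also in hand.

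The step I expect to require the most care is the valence bound $N(r,\Omega)\le\deg(\Omega)N(r,f)+\sum_{i\in I}N(r,c_i)$. Unlike a pure shift, a difference operator $\Delta_{L_i}^jf$ has poles not only at the poles of $f$ but also at their pre-images under the iterates of $L_i$; since $|a_i|=1$ these pre-images lie on the same circles $|z|=|p|$ as the original poles $p$ (by the radial invariance in Lemma~\ref{LLD-mer}), yet at distinct points, so a naive count threatens to inflate the pole contribution by the number of difference terms. My intended resolution is to exploit that, after $q=0$ has been established, $\Omega$ equals the polynomial $\Phi(f)$, whose poles are confined to those of $f$ and of the coefficients $d_j$; equivalently, one compares maximum terms, using $\mu(r,\Omega)\le\max_{i\in I}\mu(r,c_i)\,\mu(r,f)^{\deg(\Omega)}$ against $\mu(r,\Phi(f))=\mu(r,d_p)\,\mu(r,f)^p$ for large $r$, which directly forces $p\le\deg(\Omega)$ on the range where $\mu(r,f)\to\infty$. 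This is exactly the point at which the hypothesis $|a_i|=1$ of condition (B) is indispensable, and it is the only place where the argument genuinely differs in flavor from the differential case.
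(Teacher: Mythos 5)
Your skeleton --- the division $\Phi=\Phi_1B+\Phi_2$ with $\deg\Phi_2<q$, applying Theorem~\ref{Clunie-dif} to $B(f)\left(\Omega-\Phi_1\right)=\Phi_2(f)$ to get $T(r,\Omega-\Phi_1)=o(T(r,f))$, then Theorem 2.12 of \cite{HY} giving $T(r,\Phi_2/B)=qT(r,f)+o(T(r,f))$ and hence $q=0$, then $T(r,\Omega)=T(r,\Phi(f))=pT(r,f)+o(T(r,f))$, with the proximity bound $m(r,\Omega)\le\deg(\Omega)m(r,f)+o(T(r,f))$ supplied by Lemma~\ref{LLD-mer} --- is precisely the paper's intended argument; the paper offers nothing beyond ``according to the methods in this paper.'' Moreover, your suspicion about the valence half is correct and identifies a point the paper glosses over: under condition (B) the poles of $\Delta_{L_i}^jf$ lie at all iterate preimages of the poles of $f$, so already $N(r,\Delta_{L_i}f)$ can be roughly $2N(r,f)$, and the estimate $N(r,\Omega)\le\deg(\Omega)N(r,f)+\sum_{i\in I}N(r,c_i)$ used in the proof of Theorem~\ref{Malmth} does not transplant verbatim from (A) to (B).

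However, your proposed repair has a genuine gap. The maximum-term comparison concludes only ``on the range where $\mu(r,f)\to\infty$,'' and such a range need not exist: by the non-Archimedean Jensen formula, $\log\mu(r,f)=N(r,1/f)-N(r,f)+O(1)$, so for a pole-dominated solution --- e.g.\ $f=1/h$ with $h\in\mathcal{A}(\kappa)$ transcendental, where $\mu(r,f)=1/\mu(r,h)\to0$ and $m(r,f)\equiv0$ --- both of your key inequalities fail: when $\mu(r,f)<1$ the step $\mu(r,f)^{i_0+\cdots+i_m}\le\mu(r,f)^{\deg(\Omega)}$ reverses, and $\mu(r,\Phi(f))$ is no longer governed by the top term $\mu(r,d_p)\mu(r,f)^p$. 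Your alternative observation --- that after $q=0$ the poles of $\Omega=\Phi(f)$ are confined to those of $f$ and of the $d_j$ --- fares no better: it yields $N(r,\Omega)\le pN(r,f)+o(T(r,f))$, and combining this with $T(r,\Omega)=pT(r,f)+o(T(r,f))$ and the proximity bound leaves only $p\,m(r,f)\le\deg(\Omega)\,m(r,f)+o(T(r,f))$, which is vacuous exactly when $m(r,f)=o(T(r,f))$. So as written your final step does not prove $p\le\deg(\Omega)$ for pole-dominated admissible solutions. To close the gap you would need a case distinction: when $m(r,f)$ is a positive proportion of $T(r,f)$ along some sequence, your $\mu$-comparison works; when instead $N(r,f)$ dominates, one must extract $p\le\deg(\Omega)$ from pole counting in the identity $\Omega=\Phi(f)$ itself, say by comparing orders at a pole $z$ of $f$ avoiding zeros and poles of the coefficients, where $\Phi(f)$ has pole order $p$ times that of $f$ while each $f_k$ contributes at most the order of $f$ at that point --- and there one must still rule out order inflation along $L_i$-orbits of poles (a higher-order pole of $f$ at $L_i^{(s)}(z)$ raising the order of $f_k$ at $z$), which is where the genuine remaining work lies.
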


\begin{theorem}
Assume that the condition $({\rm B})$ holds. Let $f\in \mathcal{M}(\kappa)$ be a non-constant admissible solution of
\begin{equation}\label{Mokho-dif2-eq}
    \Omega\left(f,f_1,...,f_m\right)=0,
\end{equation}
where the solution $f$ is called admissible if
$$\sum_{i\in I}T(r,c_i)=o(T(r,f)).$$
If a slowly moving target $a\in \mathcal{M}(\kappa)$ with respect to $f$
does not satisfy the equation $(\ref{Mokho-dif2-eq})$, then
\begin{equation*}
    m\left(r,\frac{1}{f-a}\right)=o(T(r,f)).
\end{equation*}
\end{theorem}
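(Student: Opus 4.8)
The plan is to follow the argument of Theorems~\ref{Mokho-thm} and \ref{Mokho-FEthm} in Section 5, replacing the derivatives (resp. the shifts $f\circ L_i$) by the difference operators $\Delta_{L_i}^j$ and invoking the non-Archimedean difference analogue of the Lemma on the Logarithmic Derivative. First I would set $g=f-a$. Since each $\Delta_{L_i}^j$ is $\kappa$-linear, every generator $f_k$ of the form $\Delta_{L_i}^j f$ splits as $f_k=g_k+a_k$, where $g_k=\Delta_{L_i}^j g$ and $a_k=\Delta_{L_i}^j a$. Substituting into $(\ref{Mokho-dif2-eq})$ and expanding $\Omega$ as a polynomial in its arguments, I would collect the monomials in which no factor among $g,g_1,\dots,g_m$ appears; their sum is $P:=\Omega(a,a_1,\dots,a_m)$, while the remaining monomials constitute a difference polynomial $\Psi(g,g_1,\dots,g_m)=\sum_\iota C_\iota g^{\iota_0}g_1^{\iota_1}\cdots g_m^{\iota_m}$ each of whose terms has total degree at least one. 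This yields $\Psi+P=0$, with $P\not\equiv 0$ because $a$ does not satisfy $(\ref{Mokho-dif2-eq})$.

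Before running the estimate I would check that the new coefficients are still slowly moving. The $C_\iota$ are polynomial combinations of the original $c_i$ and of the quantities $a_k=\Delta_{L_i}^j a$, so it suffices to see that each $a_k$ is a slowly moving target. This follows from Lemma~\ref{LLD-mer}: for $|a_i|=1$ and $r>|b_i|$ one has $\mu(r,a\circ L_i)=\mu(r,a)$, and the poles of $a\circ L_i$ lie at $L_i^{-1}$ of the poles of $a$, whose absolute values are unchanged for $r>|b_i|$; hence $T(r,a\circ L_i)=T(r,a)+O(1)$ and, by $\kappa$-linearity of $\Delta_{L_i}$, $T(r,a_k)=o(T(r,f))$. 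Consequently $\sum_\iota m(r,C_\iota)=o(T(r,f))$ and $m(r,1/P)\le T(r,P)+O(1)=o(T(r,f))$.

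The estimate itself proceeds exactly as in Section 5. Fix $z\in\kappa$ avoiding the zeros and poles of $g$ and $P$ and the poles of the $C_\iota$, and put $r=|z|$. If $|g(z)|\ge 1$ the local contribution to $m(r,1/g)$ vanishes, so I only treat $|g(z)|<1$. There, using that every term of $\Psi$ has degree $\ge 1$, I would factor one power of $g$ out of the denominator and bound
\begin{equation*}
\left|\frac{\Psi(g(z),\dots,g_m(z))}{g(z)}\right|\le \max_\iota |C_\iota(z)|\,\prod_{k=1}^m\left|\frac{g_k(z)}{g(z)}\right|^{\iota_k}.
\end{equation*}
Since each $g_k=\Delta_{L_i}^j g$, the corollary to Lemma~\ref{LLD-mer} gives $m(r,g_k/g)=0$, i.e. $\mu(r,g_k/g)\le 1$. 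Writing $m(r,1/g)=\log(1/|g(z)|)=\log(|P(z)|/|g(z)|)+\log(1/|P(z)|)=\log(|\Psi(z)|/|g(z)|)+\log(1/|P(z)|)$ and feeding in the bound above, all of the logarithmic-difference terms drop out and I am left with $m(r,1/g)\le \sum_\iota m(r,C_\iota)+m(r,1/P)=o(T(r,f))$. Since $g=f-a$, this is the assertion.

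The only genuinely new point compared with Theorem~\ref{Mokho-FEthm} is the bookkeeping in the second paragraph: confirming that applying the difference operators $\Delta_{L_i}^j$ to the slowly moving target $a$ again produces slowly moving targets, so that the decomposition $\Psi+P=0$ has admissible coefficients. Everything else is the same non-Archimedean estimate driven by the strong triangle inequality and the difference analogue of the logarithmic derivative lemma, and I expect no real obstacle beyond this verification.
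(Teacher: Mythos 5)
Your proposal is correct and follows essentially the same route the paper intends: the paper proves the theorem by the substitution $g=f-a$, the decomposition $\Psi+P=0$, and the pointwise non-Archimedean estimate of Section 5, combined with the corollary to Lemma~\ref{LLD-mer} giving $m(r,\Delta_{L_i}^jg/g)=0$. Your additional verification that $T(r,a\circ L_i)=T(r,a)+O(1)$, so that the operators $\Delta_{L_i}^j$ carry slowly moving targets to slowly moving targets and the coefficients $C_\iota$ remain admissible, is exactly the bookkeeping the paper leaves implicit when it says the result follows ``according to the methods in this paper,'' and it is carried out correctly.
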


School of Mathematics \par Shandong University\par Jinan
250100, Shandong, China\par E-mail: pchu@sdu.edu.cn\par E-mail: luanyongzhi@gmail.com


\begin{thebibliography}{99}
\addcontentsline{toc}{chapter}{Bibliography}
\bibitem{Boutabaa} Boutabaa, A., Applications de la th\'eorie de
Nevanlinna $p$-adic, Collect. Math. 42(1991), 75-93.
\bibitem{Clunie} Clunie, J., On integral and meromorphic functions, J. London Math. Soc. 37(1962), 17-27.
\bibitem{GL} Gackstatter, F. and Laine, I., Zur Theorie der
gew\"ohnlichen Differentialgleichungen im Komplexen,
Ann. Polon. Math. 38(1980), 259-287.
\bibitem{HK} Halburd, R. G.  and  Korhonen, R. J., Difference analogue of the lemma on the logarithmic derivative with applications to difference equations, J. Math. Anal. Appl. 314(2006), 477-487.
\bibitem{HX} He, Y. Z. and Xiao, X. Z., Algebroid functions and
ordinary differential equations (Chinese), Science Press, Beijing, 1988.
\bibitem{HY6} Hu, P. C. and Yang, C. C., The Second Main Theorem for
algebroid functions of several complex variables, Math. Z. 220(1995),
99-126.
\bibitem{HY5} Hu, P. C. and Yang, C. C., Further results on factorization of meromorphic solutions
      of partial differential equations, Results in Mathematics
30(1996), 310-320.
\bibitem{HY} Hu, P. C. and Yang, C. C., Meromorphic functions over
         non-Archimedean fields, Mathematics and Its Applications 522,
         Kluwer Academic Publishers, 2000.
\bibitem{L1} Laine, I., Admissible solutions of some generalized
algebraic differential equations, Publ. Univ. Joensun. Ser. B
10(1974).
\bibitem{Ma} Malmquist, J., Sur les fonctions \'a un nombre fini de
brances satisfaisant \'a une \'erentielle du premier order,
Acta Math. 42(1920), 433-450.
\bibitem{MM} Mokhon'ko, A. Z. and  Mokhon'ko, V. D., Estimates for the Nevanlinna characteristics of some classes of meromorphic functions and their applications to differential equations, Sibirsk. Mat. Zh. 15(6) (1974), 1305-1322 (in Russian); English translation: Sib. Math. J. 15(6) (1974), 921-934.
\bibitem{To} Toda, N., On the growth of meromorphic solutions of an
algebraic differential equations, Proc. Japan Acad. 60 Ser. A (1984), 117-120.
\bibitem{Yo} Yosida, K., On algebroid-solutions of ordinary differential
equations, Japan J. Math. 10(1934), 119-208.
\end{thebibliography}
\end{document}